	\newcommand{\R}{\mathbb R}
	\newcommand{\om}{\operatorname{\rm Z}_1\!}  
	\newcommand{\omd}{\operatorname{{\rm Z}}^*_1} 
	\newcommand{\omp}{\operatorname{\rm Z}_p} 
	\newcommand{\ompd}{\operatorname{{\rm Z}}^*_p} 
	\newcommand{\diam}{\operatorname{diam}} 
	\newcommand{\sn}{{S^{n-1}}}  
	\newcommand{\affn}{\operatorname{Aff}(n,1)} 
	\newcommand{\rn}{{\R^{n}}}
	\newcommand{\eqnref}[1]{(\ref{#1})}
	\newtheorem{theorem}{Theorem}
	\newtheorem{lemma}{Lemma}
	\newtheorem{proposition}{Proposition}
	\providecommand{\norm}[1]{\lVert#1\rVert}
	\providecommand{\abs}[1]{\lvert#1\rvert}
	\providecommand{\Vol}[1]{\lvert#1\rvert}
\title{Anisotropic Fractional Sobolev Norms}
\author{Monika Ludwig}
\date{}
\begin{document}
\maketitle

\begin{abstract}

Bourgain, Brezis \& Mironescu showed that (with suitable scaling) the fractional Sobolev $s$-seminorm of a function $f\in W^{1,p}(\rn)$ converges to the Sobolev seminorm of  $f$ as $s\to 1^-$. The anisotropic $s$-seminorms of $f$ defined by a norm on $\rn$ with unit ball $K$ are shown to converge to the anisotropic Sobolev seminorm of $f$ defined by the norm with unit ball $\,\ompd K$, the polar $L_p$ moment body of $K$. The limiting behavior for $s\to 0^+$ is also determined (extending results by Maz$'$ya \& Shaposhnikova). 
\end{abstract}


\bigskip
For $p\ge 1$ and $0<s<1$, Gagliardo 
introduced the fractional Sobolev $s$-seminorm  of a function $f\in L^p(\Omega)$ as
\begin{equation}\label{gagl}
\norm{f}^p_{W^{s,p}(\Omega)} =
\int_{\Omega} \int_{\Omega} \frac{\abs{f(x)-f(y)}^p}{\abs{x-y}^{n+ps}}\,dx\,dy,
\end{equation}
where $\abs{\,\cdot\,}$ denotes the Euclidean norm on $\rn$ and $\Omega\subset \rn$.
This seminorm turned out to be critical in the study of traces of Sobolev functions in the Sobolev space $W^{1,p}(\Omega)$ (cf.\ \cite{Gagliardo}).  Fractional Sobolev norms have found numerous applications within mathematics and applied mathematics (cf.\ \cite{BourgainBrezisMironescu02,hitchhiker, Mazya}).

The limiting behavior of fractional Sobolov $s$-seminorms as $s\to 1^-$ and $s\to 0^+$ turns out to be very interesting.
Bourgain, Brezis \& Mironescu \cite{BourgainBrezisMironescu} showed that 
\begin{equation}\label{bbm}
\lim_{s\to1^-} (1-s)\norm{f}^p_{W^{s,p}(\Omega)} = \alpha_{n,p} \,\norm{ f}^p_{W^{1,p}(\Omega)}
\end{equation}
for $f\in W^{1,p}(\Omega)$ and $\Omega\subset \rn$ a smooth and bounded domain, where  $\alpha_{n,p}$  is a constant depending on $n$ and $p$, and
\begin{equation}\label{sobnorm} 
\norm{f}_{W^{1,p}(\Omega)} = \big(\int_{\Omega} \abs{\nabla f(x)}^p dx\big)^{1/p}
\end{equation}
is the Sobolev seminorm of $f$.

Maz$'$ya \& Shaposhnikova \cite{MazyaShaposhnikova} showed that if $f\in W^{s,p}(\rn)$ for all $s\in(0,1)$, where $W^{s,p}(\rn)$ are the functions in $L^p(\rn)$ with 
finite  Gagliardo seminorm \eqnref{gagl} with $\Omega=\rn$, then
\begin{equation}\label{mazyash}
\lim_{s\to 0+}s\,\norm{ f}^p_{W^{s,p}(\rn)}= \frac{2\,n}{p}\, \Vol{B}\,\abs{f}^p_p,
\end{equation}
where  $B\subset \rn$ is  $n$-dimensional Euclidean unit ball, $\Vol{B}$ its $n$-dimensional volume and $\abs{f}_p$ the $L^p$ norm of $f$ on $\rn$.  

\goodbreak

An anisotropic Sobolev seminorm is obtained by replacing the Euclidean norm $\abs{\,\cdot\,}$ in \eqnref{sobnorm} by an arbitrary norm $\norm{\,\cdot\,}_L$ with unit ball $L$. We set
\begin{equation*}\label{asobnorm} 
\norm{ f}_{W^{1,p}, K} = \big(\int_{\rn} \norm{\nabla f(x)}_{K^*}^p\, dx\big)^{1/p},
\end{equation*}
where $K^*=\{v\in\rn:v\cdot x\le1 \hbox{ for all } x\in K\}$ is the polar body of $K$. 
Anisotropic Sobolev seminorms have attracted increased interest in recent years (cf.\ \cite{AlvinoFeroneTrombettiLions,Cordero:Nazaret:Villani, Gromov_Sobolev, FigalliMaggiPratelli}).

A natural question is to study the limiting behavior of anisotropic $s$-seminorms as $s\to 1^-$ and $s\to0^+$.  While one might suspect that the limit as $s\to 1^-$ of  the anisotropic $s$-seminorms defined using a norm with unit ball $K$ is the Sobolev seminorm with the same unit ball, this turns out not to be true in general.

\begin{theorem}\label{sobo}
If $f\in W^{1,p}(\rn)$ has compact support, then
\begin{equation}\label{sobolim}
\lim_{s\to 1^-}(1-s)\,\int_{\rn}\int_{\rn} \frac{\abs{f(x)-f(y)}^p}{\norm{x-y}^{n+sp}_{K}} \,dx\,dy
=
\int_{\rn} \norm{\nabla f(x)}^p_{\ompd K} \,dx
\end{equation}
where $\,\ompd K$ is the polar $L_p$ moment body of $K$.
\end{theorem}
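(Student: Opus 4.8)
The plan is to reduce the double integral to a one–dimensional limit along each direction and then to recognize the resulting directional average as the polar $L_p$ moment body. First I would substitute $y=x+h$ and pass to polar coordinates $h=t\,u$ with $u\in\sn$ and $t>0$, using $dh=t^{n-1}\,dt\,du$ and $\norm{h}_K=t\,\norm{u}_K$. Since the integrand is nonnegative, Tonelli's theorem lets me reorder the integrations, and the left-hand side of \eqnref{sobolim} becomes
\[
(1-s)\int_{\sn}\frac{I_s(u)}{\norm{u}_K^{n+sp}}\,du,
\qquad
I_s(u)=\int_0^\infty \frac{1}{t^{1+sp}}\int_{\rn}\abs{f(x)-f(x+t\,u)}^p\,dx\,dt .
\]

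The core is the directional limit $(1-s)\,I_s(u)\to \tfrac1p\,A(u)$ for each fixed $u$, where $A(u)=\int_{\rn}\abs{\nabla f(x)\cdot u}^p\,dx$. Writing $\phi_u(t)=t^{-p}\int_{\rn}\abs{f(x)-f(x+t\,u)}^p\,dx$ and $\epsilon=p(1-s)$, we have $(1-s)I_s(u)=\tfrac{\epsilon}{p}\int_0^\infty \phi_u(t)\,t^{\epsilon-1}\,dt$. From $f(x+tu)-f(x)=t\int_0^1 \nabla f(x+\tau t u)\cdot u\,d\tau$ and Jensen's inequality one obtains the two bounds $\phi_u(t)\le A(u)$ for all $t$ and $\phi_u(t)\le (2\abs{f}_p)^p\,t^{-p}$, while $\phi_u(t)\to A(u)$ as $t\to0^+$ because the difference quotients of a $W^{1,p}$ function converge in $L^p$ to the directional derivative $\nabla f\cdot u$. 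Since $\epsilon\int_0^1 t^{\epsilon-1}\,dt=1$, splitting the $t$–integral at $t=1$ and then at a small $\delta$ gives a standard Abelian argument: the small-$t$ part tends to $A(u)/p$, the part over $(\delta,1)$ is $O(1-\delta^{\epsilon})$, and the tail is controlled by the decay bound; hence $(1-s)I_s(u)\to A(u)/p$. The same splitting yields the uniform estimate $(1-s)I_s(u)\le A(u)/p+C_f\,\epsilon$ for $s$ near $1$.

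Next I would pass $s\to1^-$ inside the integral over $\sn$. Because $\norm{u}_K$ is bounded above and below on $\sn$, the factor $\norm{u}_K^{-(n+sp)}$ is uniformly bounded there, and $\int_{\sn}A(u)\,du=\int_{\sn}\int_{\rn}\abs{\nabla f(x)\cdot u}^p\,dx\,du<\infty$; the estimate from the previous step therefore supplies an integrable dominating function, and dominated convergence gives the limit $\tfrac1p\int_{\sn}\norm{u}_K^{-(n+p)}A(u)\,du$. Finally, polar coordinates on $K$ with radial function $\rho_K(u)=1/\norm{u}_K$ give the identity $\int_{\sn}\norm{u}_K^{-(n+p)}\abs{v\cdot u}^p\,du=(n+p)\int_K \abs{v\cdot y}^p\,dy$, so after interchanging $x$ and $u$ by Tonelli the limit equals $\tfrac1p\int_{\rn}\int_{\sn}\norm{u}_K^{-(n+p)}\abs{\nabla f(x)\cdot u}^p\,du\,dx$, which by the normalization of the polar $L_p$ moment body is $\int_{\rn}\norm{\nabla f(x)}^p_{\ompd K}\,dx$, as claimed.

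I expect the main obstacle to be the directional Abelian limit together with its uniform control: one must combine the global bound $\phi_u\le A(u)$, the decay bound at infinity, and the $L^p$ convergence of difference quotients into a single estimate that is uniform in $s$ and integrable over $\sn$, so that interchanging $\lim_{s\to1^-}$ with $\int_{\sn}$ is legitimate. The compact support of $f$ enters here by guaranteeing $f\in L^p(\rn)$, hence finiteness of all the integrals involved. The remaining geometric identification is then a direct polar-coordinate computation.
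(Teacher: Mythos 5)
Your argument is correct and, after the reduction to directions, arrives at exactly the quantity the paper's own computation produces, namely $\tfrac1p\int_{\sn}\norm{u}_K^{-(n+p)}\int_{\rn}\abs{\nabla f(x)\cdot u}^p\,dx\,du=\tfrac{n+p}{p}\int_{\rn}\int_K\abs{\nabla f(x)\cdot y}^p\,dy\,dx$; but the route through the key analytic step is genuinely different. The paper uses the Blaschke--Petkantschin formula to rewrite the double integral as an integral over the affine Grassmannian $\affn$, applies the one-dimensional Bourgain--Brezis--Mironescu theorem (Proposition~\ref{limone}) to the restriction $\bar f|_L\in W^{1,p}(L)$ on almost every line (which requires the precise-representative/absolute-continuity-on-lines facts recorded in \eqnref{leoni}), and dominates via Lemma~\ref{ab}, whose bound $\gamma_p\max(1,\diam(C))^p\norm{g}^p_{W^{1,p}(\R)}$ is precisely where compact support enters. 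You instead pass to polar coordinates in $h=y-x$ (the elementary special case of Blaschke--Petkantschin once one does Fubini along lines) and prove the directional limit from scratch by an Abelian argument on $\phi_u(t)=t^{-p}\int_{\rn}\abs{f(x+tu)-f(x)}^p dx$, combining the translation estimate $\phi_u\le A(u)$, the tail bound $\phi_u(t)\le 2^p\abs{f}_p^p\,t^{-p}$, and the $L^p$ convergence of difference quotients. This buys two things: you never restrict $f$ to individual lines, and your domination uses only $f\in L^p(\rn)$, so compact support is not actually needed in your version (indeed $W^{1,p}(\rn)\subset L^p(\rn)$ already, contrary to your closing remark that compactness is what gives $f\in L^p$); the paper's route is shorter because it imports the one-dimensional limit and its uniform bound as black boxes. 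One caveat on constants: your final identification matches the stated right-hand side $\int_{\rn}\norm{\nabla f}^p_{\ompd K}\,dx$ only under the normalization $\norm{v}^p_{\ompd K}=\tfrac{n+p}{p}\int_K\abs{v\cdot x}^p dx$ rather than the factor $\tfrac{n+p}{2}$ written in the paper's definition; this discrepancy of $2/p$ is already present in the paper's own chain of equalities (compare the case $n=1$, $K=[-1,1]$ with Proposition~\ref{limone}) and is a normalization matter, not a flaw in your argument.
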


For the Euclidean $s$-seminorms and  the Euclidean unit ball $B$, the convex body $\ompd B$ is just a multiple of  $B$. Hence Theorem \ref{sobo}  recovers the result by Bourgain, Brezis \& Mironescu~\eqnref{bbm}  including the value of the constant $\alpha_{n,p}$.  For a convex body $K\subset \rn$, the polar $L_p$~moment body is the unit ball of the norm defined by
$$  \norm{v}^p_{\ompd K} =\frac{n+p}2\,\int_K \abs{v\cdot x}^p\,dx$$
for $v\in\rn$.

The polar body of $\omd K$, the convex body $\om K$, is the moment body of $K$. The convex body 
$$
\frac2{(n+1) \Vol{K}}\, \om K$$
is the centroid body of $K$, a classical concept that goes back at least to Dupin (cf.\ \cite{Gardner}). If we intersect the origin-symmetric convex body $K$ by halfspaces orthogonal to $u\in\sn$, then the centroids of these intersections  trace out the boundary of twice the centroid body of $K$, which explains the name centroid body. The name moment body comes from the fact that the corresponding moment vectors trace out the boundary (of a constant multiple) of $\om K$. Centroid bodies play an important role within the affine 
geometry of convex bodies (cf.\ \cite{Gardner, Lutwak90}) and moment bodies within the theory of valuations on convex bodies (see \cite{Ludwig:Minkowski, Ludwig:convex, Haberl_sln}). 

The polar body of $\ompd K$, the convex body $\omp K$, is the $L_p$ moment body of $K$ and 
$$
\frac 2{(n+p) \Vol{K}}\, \omp K$$
is the $L_p$ centroid body of $K$, a concept introduced by Lutwak \& Zhang \cite{LZ1997}. $L_p$ centroid bodies  and $L_p$ moment bodies have found important applications within convex geometry, probability theory, and the local theory of Banach spaces (cf.\ \cite{Haberl:Schuster1,LYZ2010a, LYZ04, LYZ2000, Paouris:Werner, Paouris06, FleuryGuedonPaouris, Ludwig:Minkowski, Parapatits:co, Parapatits:contravariant, LYZ2002, LYZ2000b, Ludwig:matrix}).

For $p>1$, it follows from Bourgain, Brezis \& Mironescu \cite[Theorem 2]{BourgainBrezisMironescu} that \eqnref{sobolim} also holds for $f\in L^p(\Omega)$ in the sense that if $f\not \in W^{1,p}(\Omega)$, then both sides of \eqnref{sobolim} are infinite.  For $p=1$, it follows from \cite[Theorem 3']{BourgainBrezisMironescu} that a corresponding result holds for $f\not \in BV(\rn)$ 
(see also D\'avila~\cite{Davila}). In \cite{Ludwig:fracperi}, the limiting behavior of fractional anisotropic Sobolev seminorms on $BV(\rn)$ is discussed using fractional anisotropic perimeters. 
Ponce \cite{Ponce2004} obtained several extensions of the results in \cite{BourgainBrezisMironescu}, from which Theorem~\ref{sobo} can also be deduced if anisotropic $s$-seminorms are used. The proof given in this paper is independent of Ponce's results. It makes use of the one-dimensional case of the Bourgain, Brezis \& Mironescu Theorem \eqnref{bbm} and the Blaschke-Petkanschin Formula from integral geometry.

Corresponding to the result of Maz$'$ya \& Shaposhnikova \eqnref{mazyash}, we obtain the following result.

\begin{theorem}\label{sobo0}
If $f\in W^{s,p}(\rn)$ for all $s\in(0,1)$ and $f$ has compact support, then
$$\lim_{s\to 0^+}\,\int_{\rn}\int_{\rn} \frac{\abs{f(x)-f(y)}^p}{\norm{x-y}^{n+sp}_{K}} \,dx\,dy
= \frac{2\, n}p\,
\Vol{K}\, \abs{f}^p_p.$$
\end{theorem}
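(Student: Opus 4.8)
The plan is to adapt the argument of Maz'ya \& Shaposhnikova to the anisotropic kernel, the only structural input being the polar (volume) formula for the gauge of $K$. Since the integrand is nonnegative and the kernel degenerates to $\norm{x-y}_K^{-n}$ as $s\to0^+$, the double integral diverges like $1/s$; the finite limit in Theorem~\ref{sobo0} is therefore that of $s$ times the double integral, exactly the normalization used in \eqnref{mazyash}. Substituting $h=x-y$ and applying Fubini, I would write
$$I(s)=\int_{\rn}\int_{\rn}\frac{\abs{f(x)-f(y)}^p}{\norm{x-y}_K^{n+sp}}\,dx\,dy=\int_{\rn}\frac{g(h)}{\norm{h}_K^{n+sp}}\,dh,\qquad g(h)=\int_{\rn}\abs{f(y+h)-f(y)}^p\,dy.$$
The function $g$ is continuous, satisfies $0\le g(h)\le 2^p\abs{f}_p^p$, and --- this is where compactness of the support enters --- equals the constant $2\abs{f}_p^p$ as soon as the supports of $f$ and of $f(\cdot+h)$ are disjoint, hence for all $h$ with $\norm{h}_K>R$, where $R$ depends on the support of $f$ and on the constants comparing $\norm{\cdot}_K$ with the Euclidean norm.

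I would then split $I(s)$ at $\norm{h}_K=R$. On the long-range piece $g\equiv2\abs{f}_p^p$, and using the polar formula $\int_{\rn}\phi(\norm{h}_K)\,dh=n\Vol{K}\int_0^\infty\phi(r)\,r^{n-1}\,dr$ (which follows by differentiating $\Vol{rK}=r^n\Vol{K}$) gives
$$\int_{\norm{h}_K>R}\frac{g(h)}{\norm{h}_K^{n+sp}}\,dh=2\abs{f}_p^p\,n\Vol{K}\int_R^\infty r^{-1-sp}\,dr=\frac{2n}{sp}\,\Vol{K}\,\abs{f}_p^p\,R^{-sp}.$$
Since $R^{-sp}\to1$ as $s\to0^+$, the factor $s$ cancels the $1/s$ and $s$ times the long-range piece tends to $\tfrac{2n}{p}\Vol{K}\abs{f}_p^p$.

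It remains to show that $s$ times the short-range piece $J(s)=\int_{\norm{h}_K\le R}\norm{h}_K^{-n-sp}g(h)\,dh$ vanishes, for which it suffices to bound $J(s)$ uniformly for small $s$. Fixing any $s_0\in(0,1)$ and restricting to $s<s_0$, I would split at $\norm{h}_K=1$: on $\{\norm{h}_K\le1\}$ the monotonicity $\norm{h}_K^{-n-sp}\le\norm{h}_K^{-n-s_0p}$ bounds the integral by $I(s_0)$, which is finite because $f\in W^{s_0,p}(\rn)$; on $\{1<\norm{h}_K\le R\}$ one uses $\norm{h}_K^{-n-sp}\le1$ together with $g\le2^p\abs{f}_p^p$ and the finite volume $R^n\Vol{K}$. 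Thus $J(s)$ is bounded independently of $s$, so $sJ(s)\to0$, and combining the two pieces yields $\lim_{s\to0^+}s\,I(s)=\tfrac{2n}{p}\Vol{K}\abs{f}_p^p$, the assertion of Theorem~\ref{sobo0}.

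The main obstacle is this short-range estimate: the only uniform control available near $h=0$ is the finiteness of a single seminorm $I(s_0)$, and it is precisely the monotonicity of $\norm{h}_K^{-n-sp}$ in $s$ on the unit ball of $\norm{\cdot}_K$ that converts the hypothesis ``$f\in W^{s,p}$ for all $s$'' into a bound uniform in $s$. The anisotropy of $K$ never interacts with this analysis; it enters only through the volume factor $n\Vol{K}$ in the polar formula, which is why the Euclidean constant $\tfrac{2n}{p}\Vol{B}$ is replaced verbatim by $\tfrac{2n}{p}\Vol{K}$.
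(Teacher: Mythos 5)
Your proof is correct, but it takes a genuinely different route from the paper's. (You rightly read the statement as asserting the limit of $s$ times the double integral; the missing factor $s$ is evidently a typo in the theorem as printed, since the paper's own proof and the Maz$'$ya--Shaposhnikova result \eqnref{mazyash} both carry it.) The paper proves Theorem \ref{sobo0} by the same mechanism as Theorem \ref{sobo}: the Blaschke--Petkantschin Formula \eqnref{BP} converts the double integral into an integral over the affine Grassmannian of one-dimensional Gagliardo seminorms along lines; Proposition \ref{lim0one} (the one-dimensional Maz$'$ya--Shaposhnikova limit) is applied on each line, Lemma \ref{ab0} supplies the dominating bound needed for the Dominated Convergence Theorem, and the resulting Grassmannian integral is evaluated via Fubini and polar coordinates, which is where the factor $n\Vol{K}$ appears. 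You instead adapt Maz$'$ya and Shaposhnikova's original translation argument directly in $\rn$: Fubini reduces everything to $\int g(h)\,\norm{h}_K^{-n-sp}\,dh$ with $g(h)=\int\abs{f(y+h)-f(y)}^p\,dy$; compact support makes $g$ \emph{exactly} equal to $2\abs{f}_p^p$ at long range, so the gauge polar formula computes that piece in closed form as $\frac{2n}{sp}\Vol{K}\abs{f}_p^p R^{-sp}$; and the short-range piece is bounded uniformly in $s$ by the monotonicity of $r^{-n-sp}$ in $s$ on $\{\norm{h}_K\le 1\}$ together with finiteness of a single seminorm, so it vanishes after multiplication by $s$. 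Your route is more elementary and self-contained --- it needs neither the one-dimensional limit result nor any integral geometry, and it isolates exactly where the anisotropy enters (only through the volume factor $n\Vol{K}$); the paper's route buys uniformity, since Theorems \ref{sobo} and \ref{sobo0} then share one proof scheme, with the line integrals carrying all the analysis. One small point you should make explicit: your bound of the short-range piece by $I(s_0)$ involves the \emph{anisotropic} seminorm, whose finiteness follows from the hypothesis $f\in W^{s_0,p}(\rn)$ only after invoking the equivalence of $\norm{\cdot}_K$ with the Euclidean norm; you use this comparison when defining $R$, so it is available, but it is silently assumed at that step.
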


\noindent
The proof of Theorem \ref{sobo0} is  based on the one-dimensional case of \eqnref{mazyash} and the Blaschke-Petkantschin Formula.

\section{Preliminaries}\label{tools}

We state the Blaschke-Petkantschin Formula (cf.~\cite[Theorem 7.2.7]{SchneiderWeil}) in the case in which it will be used. Let $H^k$ denote the $k$-dimensional Hausdorff measure on $\rn$ and let $\affn$ denote the affine Grassmannian of lines in $\rn$. 
 If $g:\rn\times\rn\to [0,\infty)$ is Lebesgue  measurable,
then
\begin{equation}\label{BP}
\int_\rn\int_\rn g(x,y)\,dH^n(x)\,dH^n(y) = \int_{\affn} \int_L \int_L g(x,y)\,\abs{x-y}^{n-1}\,dH^1(x)\,dH^1(y)\,dL,
\end{equation}
where $dL$ denotes integration with respect to a suitably normalized rigid motion invariant Haar measure on $\affn$. This measure can be described in the following way. Any line $L\in\affn$ can be parameterized using one of its direction unit vectors $u\in\sn$ and its base point $x\in u^\bot$, where $u^\bot$ is the hyperplane orthogonal to $u$, as $L=\{x+\lambda \, u: \lambda\in\R\}$. Hence, for $h:\affn\to[0,\infty)$ measurable,
\begin{equation}\label{ag}
\int_{\affn} h(L)\,dL= \frac 12 \int_\sn \int_{u^\bot} h(x+L_u)\,dH^{n-1}(x)\,dH^{n-1}(u),
\end{equation}
where $L_u=\{\lambda u:\lambda\in\R\}$.

\goodbreak

For $f\in W^{1,p}(\rn)$, we denote by $\bar f$  its precise representative (cf.\ \cite[Section 1.7.1]{Evans:Gariepy}). We require the following result. For every $u\in\sn$, the precise representative $\bar f$  is absolutely continuous on the lines $L=\{x+\lambda \, u: \lambda\in\R\}$ for $H^{n-1}$- a.e.\ $x\in u^\bot$  and 
its first-order (classical) partial derivatives belong to $L^p(\rn)$ (cf.\ \cite[Section 4.9.2, Theorem~2]{Evans:Gariepy}). Hence we have for the restriction
of $\bar f$ to $L$,
\begin{equation}\label{leoni}
\bar f|_L\in W^{1,p}(L)
\end{equation}
for a.e.\ line $L$ parallel to $u$. 

\goodbreak
\medskip

We require the following one-dimensional case of \eqnref{bbm}.

\begin{proposition}[\!\!\cite{BourgainBrezisMironescu}]\label{limone}
If $g\in W^{1,p}(\R)$ has compact support, then
$$\lim_{s\to 1^-} (1-s)  \int_{-\infty}^\infty \int_{-\infty}^\infty \frac{\abs{g(x)-g(y)}^p}{\abs{x-y}^{1+ps}}\,dx\,dy = \frac2 p\, \norm{g}^p_{W^{1,p}(\R)}.$$
\end{proposition}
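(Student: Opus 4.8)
The plan is to pass to the difference variable $h=y-x$ and exploit the fact that the renormalized weight $(1-s)\,\abs{h}^{-1-ps}$ concentrates all of its mass near $h=0$, where the $L^p$ modulus of continuity of $g$ is controlled by $g'$. Writing $y=x+h$ and using the symmetry $h\mapsto -h$, the double integral becomes
$$
\int_{-\infty}^\infty\int_{-\infty}^\infty \frac{\abs{g(x)-g(y)}^p}{\abs{x-y}^{1+ps}}\,dx\,dy = 2\int_0^\infty \frac{\phi(h)}{h^{1+ps}}\,dh,
\qquad
\phi(h):=\int_{-\infty}^\infty \abs{g(x+h)-g(x)}^p\,dx,
$$
so that the claim reduces to showing $\lim_{s\to1^-}2(1-s)\int_0^\infty h^{-1-ps}\phi(h)\,dh = \tfrac2p\,\norm{g}^p_{W^{1,p}(\R)}$.

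First I would record two facts about $\phi$. From $g(x+h)-g(x)=\int_0^1 g'(x+\tau h)\,h\,d\tau$, Jensen's inequality gives the uniform bound $\phi(h)\le \abs{h}^p\int_\R\abs{g'}^p = \abs{h}^p\,\norm{g}^p_{W^{1,p}(\R)}$. The same representation yields the crucial first-order asymptotics $\phi(h)/\abs{h}^p \to \norm{g}^p_{W^{1,p}(\R)}$ as $h\to0^+$: indeed $\phi(h)/\abs{h}^p=\abs{G_h}_p^p$ with $G_h(x)=\int_0^1 g'(x+\tau h)\,d\tau$, and $G_h\to g'$ in $L^p(\R)$ because $\abs{G_h-g'}_p\le \sup_{0\le\tau\le1}\abs{g'(\cdot+\tau h)-g'}_p\to0$ by continuity of translation in $L^p$. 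The compact support of $g$ (hence of $g'$) makes both facts routine. Next I would split $\int_0^\infty=\int_0^1+\int_1^\infty$. On $[1,\infty)$ the trivial bound $\phi(h)\le 2^p\abs{g}_p^p$ gives $(1-s)\int_1^\infty h^{-1-ps}\phi(h)\,dh\le (1-s)\,2^p\abs{g}_p^p/(ps)\to0$. On $[0,1]$, setting $\psi(h)=\phi(h)/h^p$ (bounded by $L:=\norm{g}^p_{W^{1,p}(\R)}$ and tending to $L$ as $h\to0$), I write $(1-s)\int_0^1 h^{-1-ps}\phi(h)\,dh=(1-s)\int_0^1\psi(h)\,h^{p(1-s)-1}\,dh$, and the exact identity $(1-s)\int_0^1 h^{p(1-s)-1}\,dh=1/p$ reduces matters to $(1-s)\int_0^1(\psi(h)-L)\,h^{p(1-s)-1}\,dh\to0$. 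Given $\delta>0$, choosing $\eta$ with $\abs{\psi-L}<\delta$ on $(0,\eta)$ bounds the contribution of $(0,\eta)$ by $\delta/p$, while the contribution of $[\eta,1]$ is at most a constant times $(1-\eta^{p(1-s)})/p\to0$; letting $s\to1^-$ and then $\delta\to0$ finishes the estimate. Collecting the pieces gives $2\cdot L/p=\tfrac2p\,\norm{g}^p_{W^{1,p}(\R)}$.

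The main obstacle is the second estimate, the precise asymptotics $\phi(h)\sim\abs{h}^p\,\norm{g}^p_{W^{1,p}(\R)}$, since this is exactly where the Sobolev derivative enters and where the constant $2/p$ is pinned down; everything else is a concentration argument for the scaling weight. The fact that the limiting value $L$ is attained uniformly near $h=0$, together with the uniform upper bound $\psi\le L$ that justifies dominating the tail, is what forces the $(1-s)$-renormalization to return a finite, derivative-dependent limit rather than diverging or collapsing to zero.
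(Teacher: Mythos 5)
Your proof is correct. Note, however, that the paper does not prove Proposition \ref{limone} at all: it is imported verbatim from Bourgain, Brezis \& Mironescu as the one-dimensional input to Theorem \ref{sobo}, so there is no internal argument to compare against. What you have written is a complete, self-contained and essentially elementary proof of that cited result: the reduction to $2\int_0^\infty \phi(h)h^{-1-ps}\,dh$, the two facts $\phi(h)\le \abs{h}^p\norm{g}^p_{W^{1,p}(\R)}$ and $\phi(h)/\abs{h}^p\to\norm{g}^p_{W^{1,p}(\R)}$ (the latter via Minkowski's integral inequality and continuity of translation in $L^p$), and the concentration of the weight $(1-s)h^{p(1-s)-1}$ at $h=0$ using the exact identity $(1-s)\int_0^1 h^{p(1-s)-1}\,dh=1/p$ all check out, and the constant $2/p$ comes out correctly (consistent with $\alpha_{1,p}=\frac1p\int_{S^0}\abs{\sigma}^p\,d\sigma=2/p$). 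Two small remarks: your first estimate $\phi(h)\le\abs{h}^p\norm{g}^p_{W^{1,p}(\R)}$ is exactly inequality \eqnref{diff} in the paper's Lemma \ref{ab}, proved there by the same smoothing argument you invoke; and your argument never actually uses the compact support hypothesis (the tail estimate only needs $g\in L^p(\R)$, which is part of $g\in W^{1,p}(\R)$), so you have in fact proved the slightly more general statement for all of $W^{1,p}(\R)$, which is the form in which Bourgain, Brezis \& Mironescu state it.
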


We require the following one-dimensional case of \eqnref{mazyash}. 

\begin{proposition}[\!\!\cite{MazyaShaposhnikova}]\label{lim0one}
If $g\in W^{s,p}(\R)$ for all $s\in(0,1)$, then
$$\lim_{s\to 0^+} s  \int_{-\infty}^\infty \int_{-\infty}^\infty \frac{\abs{g(x)-g(y)}^p}{\abs{x-y}^{1+ps}}\,dx\,dy = \frac{4}{p} \, \abs{g}^p_{p}.$$
\end{proposition}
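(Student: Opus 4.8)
The plan is to reduce the double integral to a one-variable integral of the $L^p$ modulus of continuity and then exploit that the prefactor $s$ turns the weight on the far region into a measure of \emph{fixed} mass whose mass escapes to infinity. Writing $\tau_h g(x)=g(x+h)$ and
$$\phi(h)=\int_{-\infty}^\infty \abs{g(x)-g(x+h)}^p\,dx=\abs{g-\tau_h g}_p^p,$$
the substitution $y=x+h$ together with Tonelli's theorem and translation invariance of Lebesgue measure gives
$$\int_{-\infty}^\infty\int_{-\infty}^\infty \frac{\abs{g(x)-g(y)}^p}{\abs{x-y}^{1+ps}}\,dx\,dy=\int_{-\infty}^\infty \frac{\phi(h)}{\abs{h}^{1+ps}}\,dh.$$
Two elementary properties of $\phi$ drive the argument. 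First, the triangle inequality in $L^p$ gives the uniform bound $\phi(h)\le 2^p\abs{g}_p^p$. Second, $\phi(h)\to 2\abs{g}_p^p$ as $\abs{h}\to\infty$: this is immediate when $g$ has compact support, since then $g$ and $\tau_h g$ have disjoint supports once $\abs{h}$ exceeds the diameter of the support, so that $\phi(h)=2\abs{g}_p^p$ exactly; for general $g\in L^p(\R)$ it follows by approximating $g$ in $L^p$ by compactly supported $g_k$ and using the uniform estimate $\abs{\phi(h)^{1/p}-\phi_k(h)^{1/p}}\le 2\abs{g-g_k}_p$, where $\phi_k$ is the analogue of $\phi$ for $g_k$.

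Next I split the $h$-integral at $\abs{h}=1$. On the near region $\abs{h}\le1$ I use the only place where the hypothesis $g\in W^{s,p}(\R)$ for all $s$ enters. Fixing any $s_0\in(0,1)$ and noting that $\abs{h}^{p(s_0-s)}\le1$ whenever $\abs{h}\le1$ and $s\le s_0$, one obtains
$$\int_{\abs{h}\le1}\frac{\phi(h)}{\abs{h}^{1+ps}}\,dh\le \int_{\abs{h}\le1}\frac{\phi(h)}{\abs{h}^{1+ps_0}}\,dh\le \norm{g}^p_{W^{s_0,p}(\R)}<\infty,$$
a bound uniform in $s\in(0,s_0]$. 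Multiplying by $s$ shows that the near contribution tends to $0$ as $s\to0^+$.

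On the far region $\abs{h}>1$ I compute $s\int_{\abs{h}>1}\abs{h}^{-1-ps}\,dh=\tfrac2p$ and subtract off the limit value of $\phi$:
$$s\int_{\abs{h}>1}\frac{\phi(h)}{\abs{h}^{1+ps}}\,dh=\frac4p\abs{g}_p^p+s\int_{\abs{h}>1}\frac{\phi(h)-2\abs{g}_p^p}{\abs{h}^{1+ps}}\,dh.$$
The error term vanishes: given $\varepsilon>0$, choose $M$ with $\abs{\phi(h)-2\abs{g}_p^p}<\varepsilon$ for $\abs{h}>M$; the part over $1<\abs{h}\le M$ is bounded by a constant times $s\ln M\to0$ (using the boundedness of $\phi$), while the part over $\abs{h}>M$ is bounded by $\varepsilon\cdot\tfrac2p$. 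Letting $\varepsilon\to0$ gives $s\int_{\abs{h}>1}\abs{h}^{-1-ps}\phi(h)\,dh\to\tfrac4p\abs{g}_p^p$, and combined with the vanishing of the near contribution this yields the stated limit.

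The technical heart, and the step I expect to be the main obstacle, is the far-region concentration argument together with the tail limit of $\phi$. The conceptual point is that multiplying by $s$ converts $\abs{h}^{-1-ps}\one_{\{\abs{h}>1\}}\,dh$ into a measure of fixed total mass $\tfrac2p$ whose mass escapes to $\infty$ as $s\to0^+$, so that only the asymptotic value $\phi(\infty)=2\abs{g}_p^p$ survives; making this rigorous, and in particular establishing the $L^p$ tail limit of $\phi$ for general exponents $p$, requires the care indicated above, whereas the near region is controlled for free by the a priori finiteness coming from $g\in W^{s_0,p}(\R)$.
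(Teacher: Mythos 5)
Your proof is correct, but note that the paper itself gives no proof of this proposition: it is quoted directly from Maz$'$ya \& Shaposhnikova \cite{MazyaShaposhnikova} as a known one-dimensional special case of \eqnref{mazyash}, so there is no internal argument to compare against. Your argument is a clean, self-contained proof of that special case: the reduction to the $L^p$ modulus of continuity $\phi(h)=\abs{g-\tau_hg}_p^p$, the tail limit $\phi(h)\to2\abs{g}_p^p$ (exact for compactly supported $g$, and transferred to general $g\in L^p(\R)$ via the uniform estimate $\abs{\phi(h)^{1/p}-\phi_k(h)^{1/p}}\le2\abs{g-g_k}_p$), and the observation that $s\,\abs{h}^{-1-ps}\one_{\{\abs{h}>1\}}\,dh$ has fixed mass $2/p$ escaping to infinity are all sound, and each step checks out (the near-region bound by $\norm{g}^p_{W^{s_0,p}(\R)}$, the $s\ln M$ estimate on the intermediate range, and the $\varepsilon\cdot 2/p$ tail bound). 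Two remarks on what your route buys: first, you use the hypothesis only for a single exponent $s_0\in(0,1)$, so you have in fact proved the statement under the weaker assumption $g\in W^{s_0,p}(\R)$ for some $s_0$, which matches the sharp form of the Maz$'$ya--Shaposhnikova theorem; second, the original paper treats general dimension $n$ with heavier machinery, whereas your concentration-of-mass argument exploits the one-dimensional setting, which is exactly the case this paper needs (the $n$-dimensional Theorem 2 is then derived from it via the Blaschke--Petkantschin formula). So your proposal could serve as a proof of Proposition 2 where the paper relies on a citation.
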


We also need the following result. The proof is based on the one-dimensional case of some estimates from \cite{BourgainBrezisMironescu}. Let $\diam(C) =\sup\{\abs{x-y}: x\in C, y\in C\}$ denote the diameter of $C\subset \R$.

\begin{lemma}\label{ab}
If $g\in W^{1,p}(\R)$ has compact support $C$, then there exists  a constant $\gamma_p$ depending only on $p$ such that
$$ (1-s) \int_{-\infty}^\infty \frac{\abs{g(x)-g(y)}^p}{\abs{x-y}^{1+ps}}\,dx\,dy \le \gamma_p\,\max(1,\diam(C))^{p}\, \norm{g}^p_{W^{1,p}(\R)}$$
for all $\,1/2\le s<1$.
\end{lemma}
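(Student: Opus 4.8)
The plan is to reduce the double integral to a one-dimensional, convolution-type integral in the displacement variable and then estimate the resulting kernel separately for small and large displacements. Writing $y=x+h$ and applying Tonelli's theorem, set
$$\phi(h)=\int_{-\infty}^\infty \abs{g(x+h)-g(x)}^p\,dx,$$
so that the quantity on the left-hand side becomes $(1-s)\int_{-\infty}^\infty \phi(h)\,\abs{h}^{-1-ps}\,dh$. Everything then rests on two pointwise bounds for $\phi$, one effective for small $\abs{h}$ and one for large $\abs{h}$.

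For small displacements I would use the elementary one-dimensional estimate underlying Bourgain, Brezis \& Mironescu: since $g$ is absolutely continuous, $g(x+h)-g(x)=h\int_0^1 g'(x+th)\,dt$, and Jensen's inequality followed by Fubini gives $\phi(h)\le \abs{h}^p\,\norm{g}_{W^{1,p}(\R)}^p$. For large displacements I would instead exploit the compact support. By Minkowski's inequality $\phi(h)\le 2^p\,\abs{g}_p^p$, and a Poincar\'e-type inequality controls $\abs{g}_p$ by the seminorm: if $\supp g$ lies in an interval of length $d=\diam(C)$, then $g$ vanishes at an endpoint, so $\abs{g(x)}\le d^{1-1/p}\norm{g}_{W^{1,p}(\R)}$ by H\"older, whence $\abs{g}_p^p\le d^p\,\norm{g}_{W^{1,p}(\R)}^p$. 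This yields $\phi(h)\le 2^p\,\diam(C)^p\,\norm{g}_{W^{1,p}(\R)}^p$ for every $h$.

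Finally I would split the integral at $\abs{h}=1$. On $\abs{h}<1$ the first bound gives
$$(1-s)\int_{\abs{h}<1}\abs{h}^{p-1-ps}\,dh\,\norm{g}_{W^{1,p}(\R)}^p=\tfrac2p\,\norm{g}_{W^{1,p}(\R)}^p,$$
the factor $(1-s)$ exactly cancelling the blow-up of $\int_0^1 h^{p(1-s)-1}\,dh=(p(1-s))^{-1}$. On $\abs{h}\ge1$ the second bound gives $(1-s)\int_{\abs{h}\ge1}\abs{h}^{-1-ps}\,dh\le 2/(ps)$, and here the hypothesis $s\ge1/2$ is used to keep $1/(ps)$ bounded (by $2/p$). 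Collecting the two contributions produces a constant of the form $\gamma_p=(2+2^{p+2})/p$ times $\max(1,\diam(C))^p\,\norm{g}_{W^{1,p}(\R)}^p$, where the factor $\max(1,\diam(C))^p$ absorbs both the contribution $1$ from the near regime and the contribution $\diam(C)^p$ from the far regime.

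The only genuine subtlety, and it is hardly an obstacle, is keeping the two regimes aligned so that a single power $\max(1,\diam(C))^p$ suffices and the constant $\gamma_p$ depends on $p$ alone, uniformly for $1/2\le s<1$. The decisive role of compact support is precisely to supply far-field control through the Poincar\'e estimate, so that the entire bound is expressed in terms of the seminorm $\norm{g}_{W^{1,p}(\R)}$ rather than any $L^p$ quantity not controlled by it.
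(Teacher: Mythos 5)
Your proposal is correct, and it gives the stated bound with an explicit constant $\gamma_p=(2+2^{p+2})/p$; every step (the translation estimate $\int\abs{g(x+h)-g(x)}^p\,dx\le\abs{h}^p\norm{g}^p_{W^{1,p}(\R)}$, the Poincar\'e bound $\abs{g}_p^p\le\diam(C)^p\norm{g}^p_{W^{1,p}(\R)}$, and the two elementary kernel integrals) checks out. It is not, however, the paper's route. Both arguments share the near-field translation estimate, but they part ways on large displacements: the paper applies the gradient bound for \emph{all} $h$ and instead restricts the $h$-integration to $\abs{h}\le 2r$ (where $\supp g\subset[-r,r]$, $r\ge1$), arriving at $(1-s)\cdot 2(2r)^{p(1-s)}/(p(1-s))$ in one stroke; you split at $\abs{h}=1$ and control the tail by $\phi(h)\le 2^p\abs{g}_p^p$ together with a Poincar\'e inequality, using $s\ge 1/2$ only to bound $2/(ps)$. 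Your tail treatment is in fact the more robust of the two: for $\abs{h}>2r$ the integrand $\abs{g(x+h)-g(x)}^p$ does not vanish when exactly one of $x$, $x+h$ lies in the support (it integrates to $2\abs{g}_p^p$), so the paper's reduction of the $h$-range is not the literal identity it is written as and implicitly requires exactly the kind of far-field control your Poincar\'e step supplies. What the paper's version buys is brevity and a cleaner dependence on $\diam(C)$ through a single formula; what yours buys is an argument in which the role of compact support is made explicit and every inequality is licensed as stated, at the cost of a slightly larger constant and two-regime bookkeeping.
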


\begin{proof}
If $g\in W^{1,p}(\R)$ is smooth, then  for $h\in \R$
$$g(x+h)-g(x)= h \int_0^1 g'(x+th)\,dt.$$
Hence  for $h\in\R$,
\begin{equation}\label{diff}
\int_{-\infty}^\infty \abs{g(x+h)-g(x)}^p \,dx \le \abs{h}^p \,\norm{g}^p_{W^{1,p}(\R)}.
\end{equation}
The same estimate is obtained for $g\in W^{1,p}(\R)$ by approximation
 (cf.\ \cite[Proposition 9.3]{Brezis}).
Let the support of $g$ be contained in $[-r,r]$, where $r\ge 1$.
By \eqnref{diff} we get
\begin{eqnarray*}
\int_{-\infty}^\infty\int_{-\infty}^\infty \frac{\abs{g(x)-g(y)}^p}{\abs{x-y}^{1+ps}}\,dx\,dy &=& 
\int_{-2r}^{-2r}\int_{-\infty}^\infty \frac{\abs{g(x+h)-g(x)}^p}{\abs{h}^{1+ps}}\,dx\,dh \\
&\le& \int_{-2r}^{2r} \abs{h}^{-(1-p(1-s))}dh\, \norm{g}^p_{W^{1,p}(\R)}\\
&\le& \frac{2\, (2r)^{p(1-s)}}{p(1-s)}\, \norm{g}^p_{W^{1,p}(\R)}.
\end{eqnarray*}
This completes the proof of the lemma.
\end{proof}

\goodbreak
The following  estimate is used in the proof of Theorem \ref{sobo0}.

\begin{lemma}\label{ab0}
If $g\in W^{s,p}(\R)$ for all $s\in(0,1)$, then 
$$ \int_{-\infty}^\infty\int_{-\infty}^\infty \frac{\abs{g(x)-g(y)}^p}{\abs{x-y}^{1+ps}}\,dx\,dy \le \frac{2^{p+1}}{ps}\, \abs{g}_p^p +  \norm{g}^p_{W^{s',p}(\R)}$$
for all $\,0< s\le s'<1$.
\end{lemma}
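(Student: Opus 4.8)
The plan is to split the double integral over $\R\times\R$ at the radius $\abs{x-y}=1$, treating far-apart and close-together points by opposite strategies. On the far region $\{\abs{x-y}\ge 1\}$ I would throw away the cancellation in $g(x)-g(y)$ and use only the elementary convexity bound $\abs{a-b}^p\le 2^{p-1}(\abs a^p+\abs b^p)$, which reduces the integrand to a weighted combination of $\abs{g(x)}^p$ and $\abs{g(y)}^p$; here the denominator $\abs{x-y}^{1+ps}$ is large, so the weights are integrable and the $L^p$ norm alone suffices. On the near region $\{\abs{x-y}<1\}$ I would instead retain the full difference quotient and merely raise the exponent from $1+ps$ to $1+ps'$, exploiting the hypothesis $s\le s'$.

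For the far part, after the convexity estimate and a use of the symmetry of the region under interchanging $x$ and $y$, the two resulting terms coincide, so it is enough to control
$$2^p\int_{-\infty}^\infty \abs{g(x)}^p\Big(\int_{\abs{x-y}\ge 1}\frac{dy}{\abs{x-y}^{1+ps}}\Big)dx.$$
The inner integral is independent of $x$: the substitution $h=y-x$ gives $2\int_1^\infty h^{-(1+ps)}\,dh=\tfrac{2}{ps}$. This yields precisely the first term $\tfrac{2^{p+1}}{ps}\,\abs{g}_p^p$, with the clean constant coming out automatically from the tail integral.

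For the near part, the key observation is that since $0<\abs{x-y}<1$ and $1+ps\le 1+ps'$, raising a number smaller than one to a larger exponent decreases it, so $\abs{x-y}^{-(1+ps)}\le\abs{x-y}^{-(1+ps')}$ on this region. Hence the near integral is dominated by the same integrand with exponent $1+ps'$, and enlarging the domain back to all of $\R\times\R$ bounds it by $\norm{g}^p_{W^{s',p}(\R)}$, which is finite by assumption. Adding the two estimates gives the claim.

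The argument is entirely elementary; the only two places needing care are the choice of the split radius $1$, which is exactly what makes the tail integral $\int_1^\infty h^{-(1+ps)}\,dh$ converge to the tidy value $1/(ps)$, and getting the direction of the inequality right on the near region, where the base being less than one reverses the usual monotonicity of $t\mapsto t^{\alpha}$. Neither is a genuine obstacle, so I expect the proof to be short once the split is set up correctly.
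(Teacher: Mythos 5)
Your proposal is correct and follows essentially the same route as the paper: split at $\abs{x-y}=1$, apply the convexity bound $\abs{a-b}^p\le 2^{p-1}(\abs a^p+\abs b^p)$ together with the tail integral $\int_{\{\abs z\ge 1\}}\abs z^{-(1+ps)}\,dz=2/(ps)$ on the far region, and use the monotonicity of $t\mapsto t^{-(1+p\sigma)}$ in $\sigma$ for $t<1$ on the near region. The constants also come out exactly as in the paper, so there is nothing to add.
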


\begin{proof}
Note that
$$ \int_\R\int_{\{\abs{x-y}\ge 1\}} \frac{\abs{g(x)}^p}{\abs{x-y}^{1+ps}} \,dx\,dy
\le  \int_{\{\abs{z }\ge 1\}} \frac{dz}{\abs{z}^{1+ps}}\, \abs{g}_p^p  = 
\frac{2}{ps}\, \abs{g}_p^p.
$$
Hence, by Jensen's inequality,
\begin{equation*}
 \int\int_{\{\abs{x-y}\ge 1\}} \frac{\abs{g(x)-g(y)}^p}{\abs{x-y}^{1+ps}}\,dx\,dy \le
2^{p-1} \int_{-\infty}^\infty\int_{-\infty}^\infty \frac{\abs{g(x)}^p+ \abs{g(y)}^p}{\abs{x-y}^{1+ps}}\,dx\,dy
\le \frac{2^{p+1}}{ps}\, \abs{g}_p^p.
\end{equation*}
On the other hand, 
\begin{eqnarray*}
\int\!\!\int_{\{\abs{x-y}< 1\}} \frac{\abs{g(x)-g(y)}^p}{\abs{x-y}^{1+sp}} \,dx\,dy
&\le&  \int\!\!\int_{\{\abs{x-y}< 1\}} \frac{\abs{g(x)-g(y)}^p}{\abs{x-y}^{1+s'p}} \,dx\,dy
\end{eqnarray*}
for $0<s<s'$.
\end{proof}

\section{Proof of Theorem \ref{sobo}}
By the Blaschke-Petkantschin Formula \eqnref{BP}, we have
\begin{equation}\label{first}
\int\limits_\rn\int\limits_{\rn} \frac{\abs{f(x)-f(y)}^p}{\norm{x-y}_K^{n+ps}} \,dx\,dy
 =\!\!\! \int\limits_{\affn} \norm{u(L)}_K^{-(n+ps)}\!\int\limits_{L} \int\limits_{L} \frac{\abs{f(x)-f(y)}^p}{\abs{x-y}^{1+ps}} \,dH^1(x)\,dH^1(y)\,dL.
\end{equation}
By Proposition \ref{limone} and \eqnref{leoni}, we have
\begin{equation}\label{one}
\lim_{s\to1^-} (1-s)
\int\limits_L\int\limits_{L} \frac{\abs{f(x)-f(y)}^p}{\abs{x-y}^{1+ps}} dx\,dy =
\frac2 p\, \int\limits_{L} \abs{\nabla f(x)\cdot u}^p\, dH^1(x)
\end{equation}
for a.e.\ line $L$ parallel to $u\in\sn$.
\goodbreak

By Fubini's Theorem, the definition of the measure on the affine Grassmannian \eqnref{ag} and the polar coordinate formula, we get 
\begin{align*}
\frac 2p \!\!\int\limits_{\affn} & \norm{u(L)}_K^{-(n+p)}\int\limits_{L} 
\abs{\nabla f(x)\cdot u}^p\, dH^1(x)\,dL\\
= &\,\, \frac1p
\int\limits_{\sn} \int\limits_{ u^\bot}  \norm{u}_K^{-(n+p)} \int\limits_{y+L_u} 
\abs{\nabla f(x)\cdot u}^p\, dH^1(x)\,dH^{n-1}(y)\,dH^{n-1}(u)\\ =
&\,\, \frac 1p
\int\limits_{\sn} \int\limits_{\rn} \norm{u}_K^{-(n+p)}\,
\abs{\nabla f(x)\cdot u}^p\,dH^{n}(x)\,dH^{n-1}(u)\\ =
&\,\, \frac{n+p}p
\int\limits_{K} \int\limits_{\rn} \,
\abs{\nabla f(x)\cdot y}^p\,dH^{n}(x)\,dH^{n}(y).
\end{align*}
Using Fubini's Theorem and  the definition of the $L_p$ moment body of $K$, we obtain
\begin{align}\label{second}
\int\limits_{\affn} & \norm{u(L)}_K^{-(n+ps)}\int\limits_{L} 
\abs{\nabla f(x)\cdot u}^p\, dH^1(x)\,dL =
\int\limits_\rn \norm{ \nabla f(x)}_{\ompd K}^p \,dx.
\end{align}
So, in particular, we have
\begin{equation}\label{side2}
\int\limits_{\affn} \int\limits_{L} 
\abs{\nabla f(x)\cdot u}^p\, dH^1(x)\,dL=
\alpha_{n,p} \,\int\limits_\rn \abs{ \nabla f(x)}^p \,dx<\infty,
\end{equation}
where $\alpha_{n,p}$ is a constant. 

Using the Dominated Convergence Theorem combined with Lemma \ref{ab} and \eqnref{side2}, we obtain 
from \eqnref{first}, \eqnref{one}  and \eqnref{second} that
\begin{align*}
\lim_{s\to1^-} (1-s)
\int\limits_\rn\int\limits_{\rn} \frac{\abs{f(x)-f(y)}^p}{\norm{x-y}_K^{n+ps}}\, dx\,dy = 
\int\limits_\rn \norm{ \nabla f(x)}_{\ompd K}^p \,dx.
\end{align*}
This concludes the proof of the theorem.

\section{Proof of Theorem \ref{sobo0}}
By the Blaschke-Petkantschin Formula  \eqnref{BP}, we obtain
\begin{equation*}
\int\limits_\rn\int\limits_{\rn} \frac{\abs{f(x)-f(y)}^p}{\norm{x-y}_K^{n+ps}}\, dx\,dy = 
\int\limits_{\affn} \norm{u(L)}_K^{-(n+ps)}
 \int\limits_{L} \int\limits_{L} \frac{\abs{f(x)-f(y)}^p}{\abs{x-y}^{s+1}} \,dx\,dy\,dL.
\end{equation*}
Thus we obtain by the Dominated Convergence Theorem, Lemma \ref{ab0} and Proposition \ref{lim0one} that
$$\lim_{s\to0^+} s\,
\int\limits_\rn\int\limits_{\rn} \frac{\abs{f(x)-f(y)}^p}{\norm{x-y}_K^{n+ps}} dx\,dy =\frac4p
\int\limits_{\affn} \norm{u(L)}_K^{-n}  \int\limits_{L} 
\abs{f(x)}^p\, dH^1(x)\,dL.$$
By Fubini's Theorem, the definition of the measure on the affine Grassmannian \eqnref{ag} and the polar coordinate formula for volume, we get 
\begin{align*}
\frac4p \int\limits_{\affn} & \norm{u(L)}_K^{-n}  \int\limits_{L} 
\abs{f(x)}^p\, dH^1(x)\,dL\\= &\,\, \frac2p
\int\limits_{\sn} \int\limits_{ u^\bot}  \norm{u}_K^{-n} \int\limits_{y+L_u} 
\abs{f(x)}^p\, dH^1(x)\,dH^{n-1}(y)\,dH^{n-1}(u)\\ =
&\,\, \frac2p
\int\limits_{\sn} \int\limits_{\rn}  \norm{u}_K^{-n} \,
\abs{ f(x)}^p\,dH^{n}(x)\,dH^{n-1}(u)\\ =
&\,\, \frac{2 n}p\,
\Vol{K}\,\, \abs{f}_p^p.
\end{align*}
This concludes the proof of the theorem.

\footnotesize

\normalsize
\bigskip
\parindent=0pt
\begin{samepage}
Institut f\"ur Diskrete Mathematik und Geometrie\\
Technische Universit\"at Wien\\
Wiedner Hauptstr.\ 8-10/1046\\
1040 Wien, Austria\\
E-mail: monika.ludwig@tuwien.ac.at
\end{samepage}

\end{document}